\documentclass[11pt]{article}

\usepackage{caption}
\DeclareCaptionLabelSeparator{twospace}{. }
\usepackage{indentfirst}
\usepackage{mathrsfs}
\usepackage{enumerate}
\usepackage[a4paper,margin=1in]{geometry}  
\usepackage[ruled,linesnumbered,vlined]{algorithm2e}
\usepackage{amsmath,amsthm,tikz,float}

\usepackage[colorlinks=true,citecolor=black,linkcolor=black,urlcolor=blue]{hyperref}
\usepackage{mathrsfs}
\usepackage{amssymb}

\usepackage{amsfonts}
\usepackage[shortlabels]{enumitem}
\setenumerate[1]{itemsep=0pt,partopsep=0pt,parsep=\parskip,topsep=0pt}
\usepackage{amsmath,amsthm,tikz,float,amssymb}
\usepackage[T1]{fontenc}
\usepackage[utf8]{inputenc}
\usepackage{authblk}

\usepackage{setspace}

\newtheorem{theorem}{Theorem}[section]
\newtheorem{lemma}[theorem]{Lemma}
\newtheorem{definition}[theorem]{Definition}

\newtheorem{proposition}[theorem]{Proposition}
\newtheorem{conjecture}[theorem]{Conjecture}

\newtheorem{claim}{Claim}

\newenvironment{MainThmproof}
{\begin{proof}[{Proof of Theorem \ref{MainResult}}]}
{\end{proof}}

\newenvironment{MainLemproof}
{\begin{proof}[{Proof of Lemma \ref{lemMain}}]}
{\end{proof}}

\begin{document}


\title{The Tur\'{a}n number of the triangular pyramid of 4-layers}
\author{Hangdi Chen$^{1,}$\thanks{chenhangdi188@126.com} ,  Yaojun Chen$^{2,}$\thanks{yaojunc@nju.edu.cn} ,  Xiutao Zhu$^{3,}$\thanks{zhuxiutao@nuaa.edu.cn}\\
    {\small $^1$Fujian Key Laboratory of Financial Information Processing, Putian University, }\\ 
    {\small Putian, 351100, P.R. China}\\
    {\small $^2$School of Mathematics, Nanjing University,}\\ {\small Nanjing, 210093, P.R. China}\\
	{\small $^3$School of Mathematics, Nanjing University of Aeronautics and Astronautics,}\\ {\small Nanjing, 211106, P.R. China}
}
\date{}
\maketitle

\begin{abstract}
The Tur\'{a}n number $\textup{ex}(n,H)$ of a graph $H$ is the maximum number of edges in any $H$-free graph on $n$ vertices. The triangular pyramid of $k$-layers, denoted by $TP_k$, is a generalization of a triangle. The Tur\'an problems of a triangular pyramid with small layers have been studied widely by  Liu (E-JC, 2013), Xiao, Katona, Xiao and Zamora (DAM, 2022), Ghosh, Gy\H{o}ri, Paulos, Xiao and Zamora (DAM, 2022). Moreover, Ghosh et al.  conjectured  that $\textup{ex}(n, TP_4)=\frac{1}{4}n^2+\Theta(n^{\frac{4}{3}})$. In this note, we confirm this conjecture. 
\end{abstract}

{\bf Keywords:} Tur\'{a}n number, Triangular pyramid of $k$-layers.


\section{Introduction}

In this note, all graphs considered are simple. For a graph $G$, let $V(G)$ be the \emph{vertex set} of $G$ and  $E(G)$ be the \emph{edge set} of $G$.  The number of edges of $G$ is denoted by $e(G)=|E(G)|$. The \emph{chromatic number} of $G$ is denoted by $\chi (G)$. Let $K_n$, $P_n$ and $C_n$ denote the \emph{complete graph}, \emph{path} and \emph{cycle} on $n$ vertices, respectively. Denote by $K_{a,b}$ the \emph{complete bipartite graph} whose part sizes are $a$ and $b$. A graph $G$ is called $H$-free if it does not contain a copy of $H$ as a subgraph.  

The \emph{Tur\'{a}n number} of a graph $H$, denoted by $\textup{ex}(n,H)$, is the maximum number of edges that an  $n$-vertex $H$-free graph can have. Denote by $T_k(n)$ the $n$-vertex balanced complete $k$-partite graph (the sizes of any  two partites differ by at most $1$). The first result about the Tur\'an problem can be traced back to Mantel's theorem, who determined the Tur\'an number of a triangle. 
\begin{theorem}
(\textup{Mantel \cite{Mantel}}).
For all $n$, $\textup{ex}(n,K_3)=\lfloor\frac{n^2}{4}\rfloor$ and $K_{\lfloor\frac{n}{2}\rfloor,\lceil\frac{n}{2}\rceil}$ is the unique extremal graph.  
\end{theorem}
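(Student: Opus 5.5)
The statement has two parts: the upper bound $e(G)\le\lfloor n^2/4\rfloor$ for every triangle-free $G$ on $n$ vertices, and uniqueness of the extremal graph. I would prove both with a single degree-counting argument that yields the bound and the equality structure at once. The lower bound needs no work: $K_{\lfloor n/2\rfloor,\lceil n/2\rceil}$ is bipartite, hence triangle-free, and has $\lfloor n/2\rfloor\lceil n/2\rceil=\lfloor n^2/4\rfloor$ edges.

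For the upper bound, let $G$ be triangle-free on $n$ vertices and choose a vertex $v$ of maximum degree $\Delta$. Let $A=N(v)$ and $B=V(G)\setminus A$, so $|A|=\Delta$ and $|B|=n-\Delta$. Since $G$ has no triangle, $A$ is an independent set. Therefore every edge of $G$ has at least one endpoint in $B$, and
\[
e(G)\le\sum_{u\in B}d(u)\le (n-\Delta)\Delta\le \left\lfloor \frac{n^2}{4}\right\rfloor ,
\]
where the last step holds because $x(n-x)$ over integers $x$ is maximized at $x=\lfloor n/2\rfloor$ or $x=\lceil n/2\rceil$.

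For uniqueness, suppose $e(G)=\lfloor n^2/4\rfloor$. Then every inequality in the chain is an equality, and I would read off the structure in three steps.
\begin{enumerate}
\item Equality in the first inequality means no edge has both endpoints in $B$, so $B$ is independent.
\item Equality in the second inequality means $d(u)=\Delta$ for every $u\in B$.
\item Equality in the third inequality means $\{\Delta,n-\Delta\}=\{\lfloor n/2\rfloor,\lceil n/2\rceil\}$.
\end{enumerate}
By the first two steps, $A$ and $B$ are both independent, so each $u\in B$ has all $\Delta$ of its neighbours in $A$. Since $|A|=\Delta$, each $u\in B$ is adjacent to all of $A$. Hence $G=K_{|A|,|B|}$, and by the third step this is the balanced complete bipartite graph.

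The only delicate point is the bookkeeping in the equality case, specifically ensuring that equality in $\sum_{u\in B}d(u)$ forces both that $B$ is independent and that every vertex of $B$ has full degree. This is exactly why I would state the first inequality as "every edge has an endpoint in $B$": edges inside $B$ are counted twice in $\sum_{u\in B}d(u)$, so equality in the first inequality rules them out. An alternative route is Cauchy–Schwarz via $\sum_{uv\in E}(d(u)+d(v))\le n\,e(G)$, but the max-degree argument is shorter and makes uniqueness immediate.
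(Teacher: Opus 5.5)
Your proof is correct and complete. There is nothing in the paper to compare it against: Mantel's theorem is quoted only as classical background with a citation, and the paper later uses just the value $h(n)=e(T_2(n))=\lfloor n^2/4\rfloor$.

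Your maximum-degree argument is the standard self-contained proof, and you handle the equality case properly.
\begin{itemize}
\item Since $A$ is independent, $\sum_{u\in B}d(u)=e(A,B)+2e(B)$ while $e(G)=e(A,B)+e(B)$. So equality in the first step forces $e(B)=0$.
\item Equality in the second step forces $d(u)=\Delta=|A|$ for every $u\in B$.
\item Equality in the third step forces $\{|A|,|B|\}=\{\lfloor n/2\rfloor,\lceil n/2\rceil\}$. This holds because $x(n-x)=\frac{n^2}{4}-\left(x-\frac{n}{2}\right)^2$ is strictly smaller at every other integer $x$. That strictness is the one point worth stating explicitly rather than leaving implicit in ``is maximized at''.
\end{itemize}
The degenerate cases $n\le 1$ and $\Delta=0$ are covered automatically, since your argument allows $A=\emptyset$.
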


Later, Tur\'an extended Mantel's result by determining the Tur\'an number $\textup{ex}(n,K_{r+1})$ of any clique $K_{r+1}$ \cite{Turan1941}. Since then, the study of the Tur\'an problem has attracted a lot of attention and been a very active field. And for any non-bipartite graph $H$, the asymptotic value of $\textup{ex}(n,H)$  was determined by Erd\H{o}s, Stone and Simonovits.
\begin{theorem}\label{ErdStoneSimoThm}
(\textup{Erd\H{o}s et al. \cite{Erd1966Simonovits,Erd1946Stone}}). Let $\chi (H)=r+1\ge 3$. Then
\[\textup{ex}(n,H)=e(T_r(n))+o(n^2).\]
\end{theorem}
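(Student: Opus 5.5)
The plan is to prove the Erd\H{o}s--Stone--Simonovits theorem as stated in Theorem~\ref{ErdStoneSimoThm} by two matching bounds. The lower bound is immediate: $T_r(n)$ is $r$-colourable. Since $\chi(H)=r+1$, the graph $T_r(n)$ cannot contain $H$, and so $\textup{ex}(n,H)\ge e(T_r(n))$.

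For the upper bound, let $h=|V(H)|$. Because $H$ is $(r+1)$-colourable with colour classes of size at most $h$, it is a subgraph of the complete $(r+1)$-partite graph $K_{r+1}(h)$ with all parts of size $h$. It therefore suffices to prove the \emph{Erd\H{o}s--Stone theorem}: for every $\varepsilon>0$ and every $t$, any $n$-vertex graph with at least $(1-\frac1r+\varepsilon)\frac{n^2}{2}$ edges contains $K_{r+1}(t)$ once $n$ is large. Since $e(T_r(n))=(1-\frac1r)\frac{n^2}{2}+O(1)$, this gives $\textup{ex}(n,H)\le e(T_r(n))+\varepsilon n^2$ for every $\varepsilon$, i.e.\ the $o(n^2)$ error.

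To prove the Erd\H{o}s--Stone statement, I will proceed in three steps.
\begin{enumerate}[(i)]
\item \emph{Minimum degree reduction.} Repeatedly delete vertices of degree less than $(1-\frac1r+\frac{\varepsilon}{2})|G'|$ from the current graph $G'$. A standard edge count shows the process stops while $G'$ still has $\Omega(\sqrt{\varepsilon}\,n)$ vertices. So we may assume $\delta(G)\ge(1-\frac1r+\frac{\varepsilon}{2})n$ with $n$ still large.
\item \emph{Induction on $r$.} By induction (with the base case $r=0$ trivial, or $r=1$ given by K\H{o}v\'ari--S\'os--Tur\'an), $G$ contains $K_r(s)$ with parts $A_1,\dots,A_r$, where $s$ is a large constant depending on $t$ and $\varepsilon$. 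The induction hypothesis applies because $\delta(G)$ also exceeds $(1-\frac{1}{r-1}+\varepsilon')n$.
\item \emph{Extension.} Call a vertex $v\notin\bigcup A_i$ \emph{good} if it has at least $t$ neighbours in every $A_i$. Double counting the edges between $\bigcup A_i$ and the rest, using the minimum degree bound, shows that there are at least $c\,\varepsilon n$ good vertices for some $c>0$. This works because a bad vertex misses more than $s-t$ vertices of some part, so it has at most $(r-1)s+t$ neighbours in $\bigcup A_i$, while the total number of edges leaving $\bigcup A_i$ is at least $rs(1-\frac1r+\frac{\varepsilon}{2})n-r^2s^2$. Each good vertex chooses one $t$-subset $T_i\subseteq A_i$ of neighbours in every part, giving at most $\binom{s}{t}^r$ possible choices. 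By pigeonhole, at least $t$ good vertices make the same choice once $c\,\varepsilon n\ge t\binom{s}{t}^r$. Those $t$ vertices together with $T_1,\dots,T_r$ form $K_{r+1}(t)$.
\end{enumerate}

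The main obstacle is step (iii): the double count has to beat the pigeonhole threshold. This requires $s$ to be chosen large relative to $t/\varepsilon$, so that the slack $\frac{\varepsilon}{2}rsn$ dominates the loss from bad vertices, which is about $tn$. Then $n$ is taken large relative to $\binom{s}{t}^r$. I would fix the parameters in the order $t$, then $s=\lceil Ct/\varepsilon\rceil$, then $n_0$, and check that the constants close up.
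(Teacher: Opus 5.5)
The paper gives no proof of this statement. It is quoted as a classical result of Erd\H{o}s--Stone and Erd\H{o}s--Simonovits, and it is not invoked later: the proof of Lemma~\ref{lemMain} uses the stability theorem, Theorem~\ref{erdSimoThm}, instead.

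Your proposal is a correct self-contained proof along the standard textbook route. It has three parts:
\begin{itemize}
\item the lower bound from the $r$-colourability of $T_r(n)$;
\item the reduction $H\subseteq K_{r+1}(h)$, which is exactly the Erd\H{o}s--Simonovits observation that turns the Erd\H{o}s--Stone theorem into the asymptotic for $\textup{ex}(n,H)$;
\item Erd\H{o}s--Stone itself, via minimum-degree cleaning, induction on $r$, and a pigeonhole extension.
\end{itemize}

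The numbers close up. With $g$ the number of good vertices, your double count gives exactly $g(s-t)\ge n\bigl(\frac{rs\varepsilon}{2}-t\bigr)-r^2s^2$. So $s\ge 4t/(r\varepsilon)$ yields $g\ge \frac{r\varepsilon}{4}n-r^2s$. The cleaning step does leave $\Omega(\sqrt{\varepsilon}\,n)$ vertices. That is enough both for the induction hypothesis and for the pigeonhole threshold $g>(t-1)\binom{s}{t}^r$.

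One cosmetic fix: start the induction at $r=1$ rather than $r=0$, where $1-\frac1r$ is undefined. At $r=1$, step (ii) is vacuous because $K_1(s)$ is just any $s$ vertices, and step (iii) is then itself the K\H{o}v\'ari--S\'os--Tur\'an count.

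Compared with the paper's bare citation, your route buys self-containment. It gives only a qualitative dependence of $n_0$ on $\varepsilon$ and $|V(H)|$, but that is all the $o(n^2)$ statement requires.
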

Although Theorem \ref{ErdStoneSimoThm} already gives the asymptotic value of $\textup{ex}(n,H)$, it is still very difficult to determine its exact value.  In this note, we will focus on a class of non-bipartite graph, which is called the Triangular Pyramid graph $TP_k$, defined below. 

\begin{definition}
Define the Triangular Pyramid $TP_k$ with $k$ layers as follows: Firstly, draw $k+1$ paths in layers such that the $i$-th layer is an $i$-vertex path $P_i=y_1^iy_2^i\cdots y_i^i$, where $i\in\{1,2,\ldots,k+1\}$.  Secondly, for any two consecutive layers, say the $i$-th and $(i+1)$-th, add edges $y_t^iy_t^{i+1}$ and $y_t^iy_{t+1}^{i+1}$, $t\in\{1,2,\ldots,i\}$ (see Figure \ref{TPexam}). 
\end{definition}

\begin{figure}[H]
\begin{minipage}[t]{0.5\textwidth}
    \centering
    \begin{tikzpicture}
        \coordinate (a1) at (6,6.5);
        \coordinate (a2) at (5,5.5);
		\coordinate (a3) at (7,5.5);
		\coordinate (a4) at (4,4.5); 
		\coordinate (a5) at (6,4.5);
		\coordinate (a6) at (8,4.5);

        \foreach \i in {1,2,3,4,5,6}
        {
            
            \fill (a\i) circle[radius=3pt];
        }
    
        \draw (a2) -- (a1) -- (a3) -- (a6) -- (a5)-- (a4)-- (a2);
\draw (a2) --  (a3)  -- (a5)-- (a2);

                  \node at (6,6.9) {$y_1^1$};
                  \node at (4.6,5.5) {$y_1^2$};
                  \node at (7.4,5.5) {$y_2^2$};
 \node at (3.6,4.5) {$y_1^3$};
 \node at (6,4.1) {$y_2^3$};
 \node at (8.4,4.5) {$y_3^3$};
\node at (6,3.5) {$TP_2$};	       
    \end{tikzpicture}
\end{minipage}
\begin{minipage}[t]{0.4\textwidth}
    \centering
\begin{tikzpicture}
        \coordinate (a1) at (6,6.9);
        \coordinate (a2) at (5.3,6.2);
		\coordinate (a3) at (6.7,6.2);
		\coordinate (a4) at (4.6,5.5); 
		\coordinate (a5) at (6,5.5);
		\coordinate (a6) at (7.4,5.5);
\coordinate (a7) at (3.9,4.8); 
		\coordinate (a8) at (5.3,4.8);
		\coordinate (a9) at (6.7,4.8);
\coordinate (a10) at (8.1,4.8); 
\coordinate (a11) at (3.2,4.1);
\coordinate (a12) at (4.6,4.1); 
		\coordinate (a13) at (6,4.1);
		\coordinate (a14) at (7.4,4.1);
\coordinate (a15) at (8.8,4.1);

        \foreach \i in {1,2,3,4,5,6,7,8,9,10,11,12,13,14,15}
        {
            
            \fill (a\i) circle[radius=3pt];
        }
    
        \draw (a2) -- (a1) -- (a3) -- (a6) -- (a5)-- (a4)-- (a2);
\draw (a2) --  (a3)  -- (a5)-- (a2);
 \draw (a6) -- (a10) -- (a15) -- (a14) -- (a13)-- (a12)-- (a11) -- (a7)-- (a4);
\draw (a7) --  (a8)  -- (a9)-- (a10);
  \draw (a4) -- (a8) -- (a13) -- (a9) -- (a6);
\draw (a7) -- (a12) -- (a8) -- (a5) -- (a9)-- (a14) -- (a10);
	 \node at (6,3) {$TP_4$};	       
    \end{tikzpicture}
\end{minipage}
  \caption{Triangular Pyramids with $2$ and $4$ layers respectively.}\label{TPexam}
\end{figure}
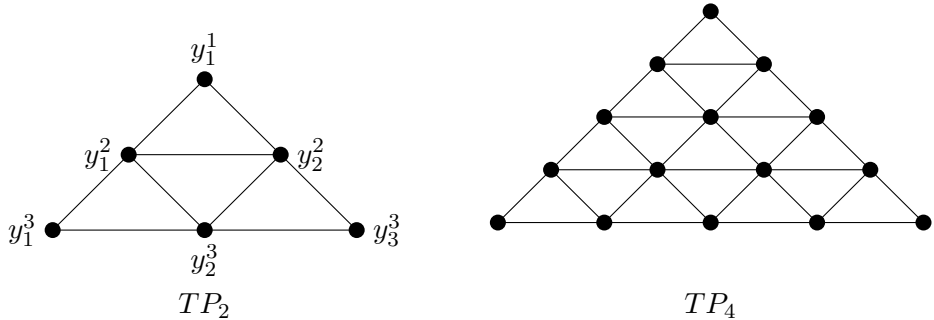

From this perspective, the triangle $K_3$ is a special pyramid $TP_1$ with $1$ layer. For the pyramid $TP_2$, Liu \cite{Liu2013Hong} determined the precise value of $\textup{ex}(n, TP_2)$ and characterized all extremal graphs when $n$ is large. Later, Xiao, Katona, Xiao and Zamora \cite{Xiao2022Zamora} determined the value of $\textup{ex}(n, TP_2)$ for all $n$.
For the triangular pyramid $TP_3$, Ghosh,  Gy\H{o}ri, Paulos, Xiao and Zamora \cite{Ghosh2022Xiao} proved the following asymptotic value of $\textup{ex}(n, TP_3)$ up to the linear factor. 

\begin{theorem}\label{Ghosh2022TP3}
(\textup{Ghosh et al. \cite{Ghosh2022Xiao}}).  The Tur\'{a}n number of $TP_3$ is 
\[\textup{ex}(n,TP_3)=\frac{n^2}{4}+n+o(n).\]
\end{theorem}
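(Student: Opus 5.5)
The plan is the standard stability approach for graphs of chromatic number $3$. It has three parts: identify the \emph{decomposition family} of $TP_3$, use it to build a lower-bound construction, and then push an extremal graph into near-bipartite form and count the edges inside its two parts.

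\textbf{Step 1 (decomposition family).} Consider any partition $V(TP_3)=A\cup B$. Every triangle of $TP_3$ (six ``upward'' and three ``downward'' triangles) must contain an edge inside $A$ or inside $B$. So the edges of $TP_3$ induced inside $A$ and inside $B$ form a set $M$ meeting all nine triangles. I would classify the minimal such $M$ by a short case analysis on the bottom layer $P_4$ and the middle layers. The goal is to show that $M$ always contains a vertex of degree at least $3$, or a short path or cycle (say a $P_4$ or a $C_4$) in one class, or two such configurations in different classes. The expected conclusion is that in a graph $K_{A,B}$ with extra edges inside $A$ and $B$, $TP_3$ appears exactly when the inner graphs are too dense locally. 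Inner graphs that are disjoint unions of triangles (or of cycles of a suitable length) should avoid this.

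\textbf{Step 2 (lower bound).} Take $K_{\lceil n/2\rceil,\lfloor n/2\rfloor}$ and place, in each part, a vertex-disjoint union of the admissible graphs from Step 1 with as many edges as vertices (disjoint triangles, say). This gives $\frac{n^2}{4}+n-O(1)$ edges. By Step 1, any copy of $TP_3$ would force a forbidden inner configuration, so the graph is $TP_3$-free.

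\textbf{Step 3 (upper bound).} Let $G$ be $TP_3$-free with $e(G)\ge \frac{n^2}{4}+n$.
\begin{itemize}
\item By Theorem~\ref{ErdStoneSimoThm} together with the Erd\H{o}s--Simonovits stability theorem, $G$ is $o(n^2)$-close to $K_{n/2,n/2}$.
\item Take a max-cut partition $V=A\cup B$ and iteratively delete vertices of degree below $(\frac12-\varepsilon)n$. Each deletion increases the excess over $\frac{m^2}{4}$, so only $o(n)$ vertices are removed.
\item After this cleaning, every vertex has at most $\varepsilon n$ non-neighbours on the other side. Any bounded configuration inside $A$ or $B$ then has a huge common cross-neighbourhood, so the rigid part of $TP_3$ can be completed greedily. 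Hence $G[A]$ and $G[B]$ contain no member of the decomposition family.
\item In particular every inner vertex has bounded inner degree, and the inner graphs lie in the structured class from Step 1, which has at most $|A|+O(1)$ and $|B|+O(1)$ edges respectively.
\item Counting cross edges as $|A||B|$ minus the missing ones, and adding back the $o(n)$ deleted vertices with their degree deficits, yields $e(G)\le \frac{n^2}{4}+n+o(n)$.
\end{itemize}

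The main obstacle is Step 1 together with the tight inner-edge count in Step 3. The case analysis of which 2-colourings of the 10-vertex graph $TP_3$ leave which inner edge sets is delicate. The bound ``inner edges $\le$ number of vertices'' also requires controlling cycle structures exactly, not just up to constants. The approach I would try first is to show that the union of the two inner graphs has no connected component with more edges than vertices. Such a component contains two cycles joined by a path (or a theta graph), and I would embed $TP_3$ along that subgraph using the dense cross edges.
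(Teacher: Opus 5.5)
The paper does not prove this statement; it quotes it from Ghosh et al. Its own machinery (a bounded exceptional set, plus $\textup{ex}(n,C_6)+O(n)$ for the inner graphs) only controls inner edges up to $O(n)$, so it cannot determine the coefficient of $n$. Your proposal therefore has to be judged on its own.

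Your lower bound is correct. A finite case check shows that every $2$-colouring of $V(TP_3)$ has a monochromatic component on at least $4$ vertices. Hence $K_{\lceil n/2\rceil,\lfloor n/2\rfloor}$ plus disjoint triangles in both parts is $TP_3$-free; you should actually carry out that check.

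\textbf{The gap is in Step 3.} Its key assertions are false for $TP_3$-free graphs of this size: that after cleaning every inner vertex has bounded inner degree; that $e(G[A])\le |A|+O(1)$ and $e(G[B])\le |B|+O(1)$ \emph{separately}; and your planned route, that no component of $G[A]\cup G[B]$ has more edges than vertices. Two facts about $TP_3$ suffice to see this.
\begin{enumerate}
\item $\alpha(TP_3)=4$, and the unique maximum independent set is the corners $y_1^1,y_1^4,y_4^4$ together with the centre $y_2^3$; its complement is a $C_6$.
\item $TP_3-I$ is connected for every independent set $I$. The neighbourhood of each vertex induces a path or a cycle, which misses $I$, so any path can be rerouted around $I$.
\end{enumerate}
Now consider two examples.
\begin{enumerate}
\item[(i)] Take $K_{|A|,|B|}$ with vertex-disjoint copies of $K_5$ inside $A$ and $B$ independent. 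A copy of $TP_3$ would send an independent set $I$ into $B$. It would then send the connected graph $TP_3-I$, which has at least $6$ vertices, into a single $K_5$, which is impossible.
\item[(ii)] Put $K_2\vee\overline{K_{|A|-2}}$ inside $A$ instead, with $B$ independent. The graph $TP_3-I$ always has vertex cover number at least $3$: for $|I|=4$ it is $C_6$, and for $|I|\le 3$ a $2$-cover would leave at least $5$ vertices independent in $TP_3$. So this graph is also $TP_3$-free.
\end{enumerate}
With $|A|\approx n/2$, both graphs have $\frac{n^2}{4}+n-O(1)$ edges. Example (i) with $n$ even and $|A|=\frac n2+1$ divisible by $5$ has $\frac{n^2}{4}+n+1$ edges. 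Yet $e(G[A])\approx 2|A|$ in both. In (i) every inner component has twice as many edges as vertices, and in (ii) two vertices have inner degree about $n/2$. So embedding $TP_3$ along a theta graph or two linked cycles via the dense cross edges cannot work: with the other part empty, such configurations do not create a $TP_3$ at all.

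\textbf{What is missing is a coupling between the two parts.} For example, $\{y_2^3,y_1^2,y_3^3,y_2^4\}$ induces $K_{1,3}$ in $TP_3$, while its complement $\{y_1^1,y_2^2,y_1^3,y_1^4,y_3^4,y_4^4\}$ induces $3K_2$. So a $K_{1,3}$ in one part together with a $3$-edge matching in the other, with the cross edges present, already gives $TP_3$. Likewise, the hexagon around $y_2^3$ minus $y_1^2$ is a $P_5$ whose complement induces $P_3\cup 2K_1$.

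Hence one part can carry noticeably more than one edge per vertex, or contain vertices of large inner degree, only if the other part has matching number at most $2$ in the relevant common neighbourhoods. In that regime the dense part must be bounded by different, one-sided obstructions, such as $C_6$ and the spider obtained by deleting a $3$-element colour class of $TP_3$.

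The term $n$ in $\frac{n^2}{4}+n$ comes from this trade-off: for instance $|A|+|B|$ for triangles on both sides versus $2|A|$ for $K_5$'s on one side. It also requires exact accounting of high-inner-degree vertices against their missing cross edges, rather than an $O(n)$ error. Step 3 as written supplies none of this.
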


Moreover, they also \cite{Ghosh2022Xiao} posed a conjecture for $\textup{ex}(n, TP_4)$.

\begin{conjecture}\label{Conj2022Ghosh}
(\textup{Ghosh et al. \cite{Ghosh2022Xiao}}).  For $n$ sufficiently large, $\textup{ex}(n,TP_4)=\frac{1}{4}n^2+\Theta(n^{\frac{4}{3}})$.
\end{conjecture}

 In this note, we confirm Conjecture \ref{Conj2022Ghosh} and obtain the following result. 

\begin{theorem}\label{MainResult}
For $n$ sufficently large, the Tur\'{a}n number of $TP_4$ is 
\[\textup{ex}(n,TP_4)=\frac{1}{4}n^2+\Theta(n^{\frac{4}{3}}).\]
\end{theorem}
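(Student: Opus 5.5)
The plan is to follow the Simonovits decomposition-family paradigm. Let $\mathcal{M}(TP_4)$ be the family of bipartite graphs $M$ such that placing a copy of $M$ inside one class of a sufficiently large complete bipartite graph $K_{t,t}$ creates a copy of $TP_4$. Equivalently, $M=TP_4-I$, where $I$ is an independent set of $TP_4$ and $TP_4-I$ is bipartite. The general principle is that $\textup{ex}(n,TP_4)=\frac{n^2}{4}+\Theta(\textup{ex}(n,\mathcal{M}))$ up to linear terms. I will not invoke this as a black box. Instead I will prove the two bounds separately, with the exponent $\frac43$ coming from cycles of length at most $6$.

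\textbf{Lower bound.} I will take a bipartite graph $F$ on $\lceil n/2\rceil$ vertices with girth at least $8$ and $\Theta(n^{4/3})$ edges, for instance the incidence graph of a generalized quadrangle (Benson's construction). I place $F$ inside one class of $K_{\lfloor n/2\rfloor,\lceil n/2\rceil}$. Call the resulting graph $G$; it has $\frac{n^2}{4}+\Theta(n^{4/3})$ edges. To show $G$ is $TP_4$-free, fix a copy of $TP_4$ in $G$. Its vertices in the class without edges of $F$ form an independent set $I$, and the remaining vertices span a subgraph of $F$, so that subgraph has girth at least $8$. It therefore suffices to prove the following: for every independent set $I$ of $TP_4$ such that $TP_4-I$ is bipartite, the graph $TP_4-I$ contains a cycle of length $4$ or $6$. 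Since $TP_4$ is a triangulated region, $I$ must meet every one of its $16$ triangular faces, because each face is a triangle and $TP_4-I$ is triangle-free. An independent set of $TP_4$ contains at most one vertex of each face. A counting argument over faces, or a direct case analysis of $I$ up to the symmetries of $TP_4$, should show that some pair of adjacent faces, or some hexagon around an interior vertex, survives deletion. That surviving structure is a $C_4$ or a $C_6$ in $TP_4-I$.

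\textbf{Upper bound.} First I will exhibit one explicit member $M_0\in\mathcal{M}(TP_4)$ with $\textup{ex}(n,M_0)=O(n^{4/3})$. The natural choice is $I$ equal to one color class of the unique proper $3$-coloring of $TP_4$. I expect $M_0=TP_4-I$ to be a subgraph of a theta graph, or of a graph known to have extremal number $O(n^{4/3})$ (such as $C_6$-type configurations, or a graph of degeneracy at most $2$ on one side, handled by Füredi's or Alon–Krivelevich–Sudakov's bound). If no color class works directly, I will choose $I$ differently so that $TP_4-I$ has this form.

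Then I run the standard argument. Let $G$ be $TP_4$-free with $e(G)\ge\frac{n^2}{4}+Cn^{4/3}$.
\begin{enumerate}
\item By Theorem \ref{ErdStoneSimoThm} and the Erdős–Simonovits stability theorem, $G$ is $o(n^2)$-close to $K_{n/2,n/2}$.
\item By deleting low-degree vertices and checking that the edge excess only increases, I may assume $\delta(G)\ge(\frac12-\varepsilon)n$.
\item Take a max-cut partition $V(G)=A\cup B$. Each vertex has only $o(n)$ neighbors in its own class, and $e(G[A])+e(G[B])\ge e(G)-|A||B|\ge Cn^{4/3}$.
\item Hence one class, say $A$, has $e(G[A])\ge\frac{C}{2}n^{4/3}$, so $G[A]$ contains a copy of $M_0$.
\item The at most $15$ vertices of this copy have a common neighborhood in $B$ of size $\Omega(n)$, because each misses only $\varepsilon n$ vertices of $B$. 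Inside that common neighborhood, together with the edges of $G$ between it and the copy, I embed $I$ and obtain $TP_4$, a contradiction.
\end{enumerate}

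I expect the main obstacle to be the combinatorial core on both sides. For the upper bound, I must pin down a concrete member of $\mathcal{M}(TP_4)$ whose extremal number is provably $O(n^{4/3})$, which requires identifying the structure of $TP_4-I$ precisely. For the lower bound, I must verify that every admissible deletion of an independent set from $TP_4$ leaves a cycle of length $4$ or $6$. I would attack both by first writing out the $3$-coloring of $TP_4$ and enumerating the independent transversals of its $16$ triangles up to symmetry.
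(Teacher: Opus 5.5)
There is a genuine gap in step~3 of your upper bound, and step~5 depends on it. A max-cut partition together with $\delta(G)\ge(\frac12-\varepsilon)n$ only gives $|N(v)\cap A|\le|N(v)\cap B|$ for $v\in A$. It does not give $o(n)$ neighbours in the vertex's own class.

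For example, add to $K_{\lfloor n/2\rfloor,\lceil n/2\rceil}$ up to four independent vertices, each joined to half of each side. The graph stays $TP_4$-free: it lies in a complete $3$-partite graph with a part of size at most $4$, while every proper $3$-colouring of $TP_4$ has three classes of size $5$. Each added vertex has about $n/4$ neighbours on its own side and sees only about half of the other side. Two of them can have complementary neighbourhoods in $B$. So the copy of $M_0$ you find in step~4 may pass through such vertices and have no common neighbourhood in $B$ at all.

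To avoid this you must find $M_0$ inside $A\setminus R$, where $R$ is the set of vertices with more than $\lambda n$ neighbours in their own class. But the edges meeting $R$ number up to $|R|n$, and stability alone only yields $|R|=O(\varepsilon n/\lambda)$. Without a further argument, these exceptional vertices could carry the whole excess $Cn^{4/3}$, leaving $G[A\setminus R]$ too sparse to contain $M_0$.

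The missing ingredient is the paper's Claim~2, $|R|=O(1)$. Write $W_i=X_i\setminus R$. For $u\in R$, both $N(u)\cap W_1$ and $N(u)\cap W_2$ have more than $\lambda n/2$ vertices, and by stability they span an almost complete bipartite graph. So by Erd\H{o}s--Simonovits supersaturation, $N(u)$ contains $\Omega(n^{30})$ copies of $K_{15,15}$. On the other hand, no $K_{15,15}$ lies in the common neighbourhood of five vertices, since that would give $K_{5,5,5}\supseteq TP_4$. Double counting pairs (copy of $K_{15,15}$, vertex of $R$) against the trivial bound $n^{30}$ on the number of copies gives $|R|=O(1)$. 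The edges at $R$ then cost only $O(n)$, and your steps~4--5 go through verbatim in $G[W_1]$ and $G[W_2]$.

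The other points you left open do close. For $I$ a colour class, $M_0$ is a $C_6$ with pendant paths of length two at two adjacent vertices. Then $\textup{ex}(n,M_0)\le\textup{ex}(n,C_6)+10n=O(n^{4/3})$, by passing to a subgraph of minimum degree $10$ and extending a $C_6$. F\"uredi-type bounds would give a worse exponent here, since both sides of $M_0$ contain a vertex of degree $3$.

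For the lower bound, which the paper simply cites from Ghosh et al., your reduction is correct and the case analysis works out. The only independent sets meeting all $16$ faces are the three colour classes. Each contains exactly one interior vertex, whose surrounding hexagon survives as a $C_6$. A $C_4$ formed by two adjacent faces can never survive, since $I$ meets every face.
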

\section{Preliminaries}
In this section, we introduce some notations and results which will be used in the proof of Theorem \ref{MainResult}. Let $G$ be a simple graph, and $N_G(v)$ denote the neighborhood of $v$ consisting of all vertices adjacent to $v$. The degree of a vertex $v$, denoted by $d_G(v)$, is the size of $N_G(v)$, and the minimum degree of $G$ is denoted by $\delta (G)$.  For two disjoint subsets  $A$ and $B$ of $V(G)$, let $G[A]$ and $G[A,B]$ denote the subgraph induced by $A$ and the bipartite subgraph which consists of all edges with one endpoint in $A$ and the other endpoint in $B$, respectively. The number of edges of $G[A,B]$ is denoted by $e_G(A,B)$. When no confusion can occur, we will omit the subscript $G$. 


Let $F$ be a graph with $V(F)=\{x_i\mid 1\le i\le 10\}$ and $E(F)=\{x_ix_{i+1}\mid 1\le i\le 9\}\cup\{x_3x_8\}$, as shown in Figure \ref{Fpic}. 

\begin{figure}[H]

    \centering
    \begin{tikzpicture}
        \coordinate (a1) at (6,1.5);
        \coordinate (a2) at (7.3,2.3);
		\coordinate (a3) at (8.6,1.5);
		\coordinate (a4) at (8.6,0); 
		\coordinate (a5) at (7.3,-0.8);
		\coordinate (a6) at (6,0);
\coordinate (b1) at (4,1.5);
        \coordinate (b2) at (5,1.5);
\coordinate (c1) at (4,0);
        \coordinate (c2) at (5,0);
		
        \foreach \i in {1,2,3,4,5,6}
        {
            
            \fill (a\i) circle[radius=3pt];
        }
       \foreach \i in {1,2}
        {
            
            \fill (b\i) circle[radius=3pt];
            \fill (c\i) circle[radius=3pt];
        }
        \draw (a1) -- (a2) -- (a3) -- (a4) -- (a5)-- (a6)-- (a1);
       \draw (b1) -- (b2) -- (a1);
       \draw (c1) -- (c2) -- (a6);
		
                  \node at (4,1.8) {$x_1$};
                  \node at (5,1.8) {$x_2$};
                  \node at (6,1.8) {$x_3$};
 \node at (7.3,2.6) {$x_4$};
 \node at (8.9,1.6) {$x_5$};
 \node at (8.9,-0.1) {$x_6$};
 \node at (7.3,-1.1) {$x_7$};
\node at (6,-0.3) {$x_8$};
\node at (5,-0.3) {$x_9$};
\node at (4,-0.3) {$x_{10}$};
    \end{tikzpicture}
 \caption{The graph $F$.}\label{Fpic}
\end{figure}
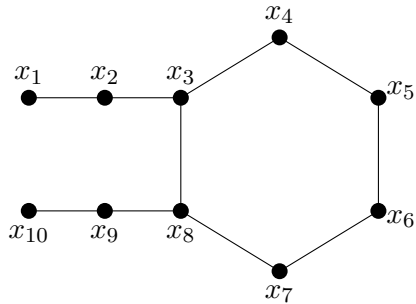

We need the following upper bound for the Tur\'{a}n number of $F$. 

\begin{proposition}\label{propMain}
$\textup{ex}(n,F)\le \textup{ex}(n,C_6)+10n$.
\end{proposition}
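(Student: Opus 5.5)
We would prove Proposition \ref{propMain} by reduction. Take an $F$-free graph $G$ on $n$ vertices with $e(G) > \textup{ex}(n,C_6)+10n$. Delete a few edges so that the remainder still contains a $C_6$, but in such a way that any $C_6$ in the remainder extends to a copy of $F$ in $G$.

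Recall that $F$ consists of a hexagon $x_3x_4x_5x_6x_7x_8$ with pendant paths $x_3x_2x_1$ and $x_8x_9x_{10}$ of length two, attached at two adjacent hexagon vertices. So it suffices to find a $6$-cycle containing an edge $uv$ such that $u$ and $v$ have vertex-disjoint pendant paths of length $2$ avoiding the cycle. We will arrange this through a minimum degree condition.

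\textbf{Step 1 (cleaning).} Repeatedly delete a vertex of degree at most $10$ from $G$. Each deletion removes at most $10$ edges, so the process removes at most $10n$ edges overall. It must stop before the graph becomes empty, since otherwise $e(G)\le 10n$. The result is a nonempty subgraph $G'$ on $n'\le n$ vertices with $\delta(G')\ge 11$. Moreover
\[e(G')\ge e(G)-10(n-n')>\textup{ex}(n,C_6)\ge \textup{ex}(n',C_6),\]
using monotonicity of $\textup{ex}(\cdot,C_6)$ in $n$. Hence $G'$ contains a $6$-cycle $C=v_1v_2\cdots v_6$.

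\textbf{Step 2 (extension).} Take the edge $v_1v_2$ of $C$.
\begin{itemize}
\item Since $d_{G'}(v_1)\ge 11$ and at most $5$ neighbours of $v_1$ lie on $C$, there is a neighbour $a\notin V(C)$.
\item The vertex $a$ has at least $11-1-6$ neighbours outside $V(C)\cup\{v_1,a\}$; pick one, $a'$.
\item Similarly $v_2$ has a neighbour $b\notin V(C)\cup\{a,a'\}$, since it has at least $11-5-2\ge 4$ candidates.
\item Finally $b$ has a neighbour $b'\notin V(C)\cup\{a,a',b\}$, since at most $6+2=8<11$ vertices are excluded besides $b$ itself.
\end{itemize}
All ten vertices are then distinct. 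Mapping $x_3\mapsto v_1$, $x_8\mapsto v_2$, the remaining hexagon vertices $x_4,\dots,x_7$ to $v_6,v_5,v_4,v_3$ in that order, $x_2x_1\mapsto aa'$ and $x_9x_{10}\mapsto bb'$ gives a copy of $F$. The edge $x_3x_8$ is the cycle edge $v_1v_2$, and the hexagon path $x_3x_4\cdots x_8$ goes around $C$ the other way. This contradicts $F$-freeness.

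The only delicate point is the bookkeeping in Step 1: the deletion threshold $10$ must be large enough to make the greedy embedding in Step 2 succeed, and the inequality $\textup{ex}(n',C_6)\le\textup{ex}(n,C_6)$ must hold. The first is guaranteed by the counts above. The second holds because adding isolated vertices to a $C_6$-free graph keeps it $C_6$-free. Beyond that, the argument is a routine greedy embedding with no real obstacle.
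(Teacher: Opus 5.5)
Your proof is correct and is essentially the paper's argument: you iteratively delete low-degree vertices at a cost of at most $10n$ edges, so the remaining subgraph still has more than $\textup{ex}(n,C_6)$ edges and hence a $C_6$, and then you use the minimum degree to greedily attach the two pendant paths at adjacent cycle vertices. The differences are cosmetic: you get $\delta(G')\ge 11$ rather than $\delta(G')\ge 10$, and you write out the distinctness counts and the explicit embedding of $F$, which the paper leaves implicit.
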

\begin{proof}
Let $G$ be an $F$-free graph on $n$ vertices. Suppose that $e(G)> \textup{ex}(n,C_6)+10n$. We delete the vertices of degree less than $10$ in $G$ recursively and get the subgraph $G'$. Note that $\delta(G')\ge 10$. It is easy to get that $e(G')> \textup{ex}(n,C_6)$ and hence there is a copy of $C_6$ in $G'$. Since $\delta(G')\ge 10$, we can extend this copy of $C_6$ to $F$, a contradiction. 
\end{proof}

\begin{proposition}\label{propTP4}
Suppose that $s\ge 10$ and $t\ge 10$ are two integers. If $G$ is a graph obtained from a $K_{s,t}$ by adding an $F$ to one part of $K_{s,t}$, then $G$ contains a $TP_4$.
\end{proposition}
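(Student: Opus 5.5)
The plan is to exhibit an explicit embedding of $TP_4$. We place a suitable independent set $B$ of $TP_4$ into the part of $K_{s,t}$ that does not contain $F$, and place the remaining vertices $A$ into the part carrying $F$, so that the edges of $TP_4$ inside $A$ are exactly edges of $F$. All edges between $A$ and $B$ are then automatically present, since $K_{s,t}$ is complete bipartite. We need $|A|\le 10\le s$ and $|B|\le 10\le t$, so the size conditions in Proposition \ref{propTP4} are enough.

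To choose $B$, we use the natural proper $3$-colouring of $TP_4$: colour $y_t^i$ by $(i+t)\bmod 3$. Each type of edge changes the colour.
\begin{itemize}
\item A layer edge $y_t^iy_{t+1}^i$ changes $i+t$ by $1$.
\item An edge $y_t^iy_t^{i+1}$ changes $i+t$ by $1$.
\item An edge $y_t^iy_{t+1}^{i+1}$ changes $i+t$ by $2$.
\end{itemize}
So each colour class is independent. $TP_4$ has $15$ vertices and $30$ edges, with each class of size $5$. We take $B$ to be the class $\{(i,t): i+t\equiv 2\}$, namely
\[B=\{y_1^1,\; y_2^3,\; y_1^4,\; y_4^4,\; y_3^5\}.\]
This class contains the central vertex $y_2^3$.

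The key step is to check that $TP_4[A]$, on the remaining $10$ vertices, is isomorphic to $F$. The neighbours of the centre $y_2^3$ are $y_1^2,y_2^2,y_1^3,y_3^3,y_2^4,y_3^4$, and they form a $6$-cycle in $A$:
\[y_1^2\, y_2^2\, y_3^3\, y_3^4\, y_2^4\, y_1^3\, y_1^2 .\]
The other four vertices of $A$ lie in the bottom layer: $y_1^5,y_2^5,y_4^5,y_5^5$. Their neighbours in $A$ are as follows.
\begin{itemize}
\item $y_1^5$ is adjacent only to $y_2^5$.
\item $y_2^5$ is adjacent to $y_1^5$ and $y_2^4$.
\item $y_5^5$ is adjacent only to $y_4^5$.
\item $y_4^5$ is adjacent to $y_5^5$ and $y_3^4$.
\end{itemize}
This gives two pendant paths of length $2$, namely $y_1^5y_2^5y_2^4$ and $y_5^5y_4^5y_3^4$. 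They are attached to $y_2^4$ and $y_3^4$, which are consecutive on the hexagon. The edge count checks out: $6+2+2=10=30-20$, where the $20$ edges of $TP_4$ meeting $B$ are exactly the $A$--$B$ edges. So $TP_4[A]$ is precisely a $6$-cycle plus two pendant $P_3$'s attached at adjacent cycle vertices, which is $F$ with $x_3=y_2^4$ and $x_8=y_3^4$.

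Hence mapping $A$ onto the copy of $F$ and $B$ into the other part of $K_{s,t}$ gives a copy of $TP_4$ in $G$. The only real obstacle is finding the right colour class; once $B$ is chosen to contain the centre, the remaining verification is a routine adjacency check.
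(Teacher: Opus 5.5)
Your proof is correct and is essentially the paper's argument: properly 3-colour $TP_4$, note that two colour classes induce $F$ while the third is independent, and put the independent class in the other part of $K_{s,t}$. Your explicit colouring $(i+t)\bmod 3$ and the adjacency check (the hexagon, the two pendant paths, and the count $30-20=10$ all hold) simply fill in the step the paper calls ``easy to check''; as a small aside, the rotation of the triangular grid permutes the three colour classes cyclically, so any of them would have served as $B$, not only the one containing $y_2^3$.
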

\begin{proof}
Suppose that $G$ satisfies the above condition and let $H=TP_4$. Note that $\chi (H)=\chi(TP_4)=3$. Consider a proper 3-coloring of the graph $H$, and denote by $V_i$  the set of vertices assigned color $i$ for each $i\in\{1,2,3\}$. It is easy to check that $H[V_1\cup V_2]= F$, and $V_3$ is an independent set of $H$. By the definition of the graph $G$, $G$ contains a $TP_4$.
\end{proof}

 The following two classical results are due to Erd\H{o}s and Simonovits. 

\begin{theorem}\label{erd1983SimoThm}
(\textup{Erd\H{o}s and Simonovits \cite{Erd1983Simonovits}}).There exist positive constants $c = c(r)$,$\gamma= \gamma(r)$ such that any graph $G$ on n vertices with $e=e(G)\ge cn^{2-\frac{1}{r}}$ edges contains at least  $\gamma\frac{e^{r^2}}{n^{2r^2-2r}}$  copies  of $K_{r,r}$. 
\end{theorem}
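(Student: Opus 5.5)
We prove this classical supersaturation statement by a two-step double-counting argument in the style of Kővári–Sós–Turán, applying Jensen's inequality twice. Throughout, let $f_r(x)=\binom{x}{r}=x(x-1)\cdots(x-r+1)/r!$ for $x\ge r-1$ and $f_r(x)=0$ for $x<r-1$. This extension is convex on $\mathbb{R}$. Moreover $f_r(x)\ge (x/2)^r/r!$ whenever $x\ge 2r$, since then each factor $x-i$ with $i<r$ is at least $x/2$.

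First we count stars. Let $N$ be the number of pairs $(v,S)$ with $S$ an $r$-subset of $N_G(v)$. Then $N=\sum_v f_r(d(v))$, and by convexity $N\ge n f_r(2e/n)$. If $c$ is large enough then $2e/n\ge 2c n^{1-1/r}\ge 2r$, so $N\ge n(e/n)^r/r!$.

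Next we count the same pairs from the side of the $r$-sets. For each $r$-set $S\subseteq V(G)$ let $d(S)$ be the size of its common neighbourhood, so that $N=\sum_S d(S)$. Every choice of an $r$-set $S$ together with an $r$-subset $T$ of its common neighbourhood gives a copy of $K_{r,r}$. Each copy of $K_{r,r}$ arises from at most two such choices, one for each of its sides. Hence the number of copies is at least $\frac12\sum_S f_r(d(S))$.

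By Jensen's inequality over all $\binom nr$ sets, this is at least $\frac12\binom nr f_r(\bar d)$, where $\bar d=N/\binom nr$. Using $\binom nr\le n^r/r!$ gives
\[
\bar d\ \ge\ \frac{n(e/n)^r/r!}{n^r/r!}\ =\ \frac{e^r}{n^{2r-1}}.
\]
The hypothesis $e\ge cn^{2-1/r}$ gives exactly $e^r/n^{2r-1}\ge c^r$. Choosing $c$ with $c^r\ge 2r$ therefore ensures $\bar d\ge 2r$, and so $f_r(\bar d)\ge (\bar d/2)^r/r!$.

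Combining these bounds, and using $\binom nr\ge (n/r)^r$, the number of copies of $K_{r,r}$ is at least
\[
\frac12\Big(\frac nr\Big)^r\frac{1}{2^r r!}\Big(\frac{e^r}{n^{2r-1}}\Big)^r
=\gamma\,\frac{e^{r^2}}{n^{2r^2-2r}},
\]
with $\gamma=\big(2\,(2r)^r\, r!\big)^{-1}$. This is the claimed bound.

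The only delicate point is the application of Jensen to binomial coefficients, which are not convex for small arguments and may be evaluated at averages below $r$. The convex extension $f_r$ handles the first issue. Choosing $c$ large enough that both averages, $2e/n$ and $\bar d$, are at least $2r$ handles the second, since it lets us replace $f_r$ by a clean power lower bound. Everything else is bookkeeping of constants depending only on $r$.
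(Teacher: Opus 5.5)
Your proof is correct. The paper gives no proof of this statement and simply cites Erd\H{o}s--Simonovits. Your argument is the standard K\H{o}v\'ari--S\'os--Tur\'an-type supersaturation proof, and it is essentially how the cited source proceeds: count pairs $(v,S)$ with $S\subseteq N(v)$, then apply Jensen once over vertices and once over $r$-sets, using the convex extension of $\binom{x}{r}$.

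One detail is left implicit. You need $n\ge r$ both to define $\bar d=N/\binom nr$ and to use $\binom nr\ge (n/r)^r$. This follows from the hypothesis, since $cn^{2-1/r}\le e<n^2/2$ forces $n>(2c)^r\ge r$.
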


\begin{theorem}\label{erdSimoThm}
(\textup{Simonovits \cite{1966Simonovits}}). Let $k\ge 2$ and suppose that $H$ is a graph with $\chi(H)=k+1$. If $G$ is an $H$-free graph with $e(G)\ge e(T_k(n))-o(n^2)$, then $G$ can be obtained from $T_k(n)$ by adding and deleting $o(n^2)$ edges. 
\end{theorem}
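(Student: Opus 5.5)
We argue in two stages: first reduce from a general $H$ with $\chi(H)=k+1$ to the clique $K_{k+1}$, then prove stability for $K_{k+1}$-free graphs directly. The reduction uses the graph removal lemma. Write $h=|V(H)|$ and note that $H\subseteq K_{k+1}(h)$, the complete $(k+1)$-partite graph with parts of size $h$.

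\textbf{Reduction to the clique.} Suppose $G$ is $H$-free with $e(G)\ge e(T_k(n))-o(n^2)$, and suppose $G$ contained $\eta n^{k+1}$ copies of $K_{k+1}$ for some fixed $\eta>0$. A standard supersaturation argument applies here: average over random $(k+1)$-sets of blocks, or iterate the hypergraph Kővári–Sós–Turán argument, in the same spirit as Theorem \ref{erd1983SimoThm}. It would then produce a copy of $K_{k+1}(h)\supseteq H$, a contradiction. Hence $G$ has $o(n^{k+1})$ copies of $K_{k+1}$. By the removal lemma we can delete $o(n^2)$ edges to obtain a $K_{k+1}$-free graph $G_1$ with $e(G_1)\ge e(T_k(n))-o(n^2)$. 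It therefore suffices to prove the following: every $K_{k+1}$-free graph with $e(T_k(n))-m$ edges can be made $k$-partite by deleting $O(m)+o(n^2)$ edges, and a $k$-partite graph with that many edges differs from $T_k(n)$ in $o(n^2)$ edges.

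\textbf{Clique stability (Füredi's argument).} Let $G_1$ be $K_{k+1}$-free. We build a clique greedily and read off a vertex partition from it.
\begin{enumerate}
\item Choose $v_1$ of maximum degree and set $A_1=V\setminus N(v_1)$.
\item Inside $N(v_1)$ choose $v_2$ maximizing $|N(v_2)\cap N(v_1)|$, and set $A_2=N(v_1)\setminus N(v_2)$.
\item Continue in the same way: at step $i$ take $v_i$ in $N(v_1)\cap\cdots\cap N(v_{i-1})$ with maximum degree into that set.
\end{enumerate}
Because $G_1$ has no $K_{k+1}$, the process stops after at most $k$ steps. The sets $A_1,\dots,A_k$ then partition $V$. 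Bounding the degrees of the vertices of each $A_i$ by $d(v_i)$ gives
\[
e(G_1)+\sum_i e(G_1[A_i])\le \sum_i |A_i|\,(n-|A_i|)\le e(T_k(n)).
\]
Consequently $\sum_i e(G_1[A_i])\le m$. Deleting the edges inside the parts $A_i$ leaves a $k$-partite graph with at least $e(T_k(n))-2m$ edges.

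\textbf{Comparison with $T_k(n)$.} A $k$-partite graph with parts $a_1,\dots,a_k$ and $e(T_k(n))-o(n^2)$ edges must have all $a_i=n/k+o(n)$. This follows from the concavity identity
\[
e(T_k(n))-\sum_{i<j}a_ia_j=\tfrac12\sum_i (a_i-n/k)^2+O(n).
\]
It must also miss only $o(n^2)$ cross edges. So the graph differs from $T_k(n)$ in $o(n^2)$ edges. Adding back the $o(n^2)$ edges removed in the reduction completes the argument.

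\textbf{Main obstacle.} The hardest step is the reduction, namely supersaturation combined with removal. The clique-stability counting is elementary. I would cite the removal lemma and the Erdős–Simonovits supersaturation as black boxes rather than reprove them.
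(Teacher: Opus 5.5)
The paper gives no proof of this statement. It is quoted from Simonovits and used as a black box, only with $k=2$ and $H=TP_4$. Your architecture is a legitimate route, essentially F\"uredi's. Since $H\subseteq K_{k+1}(h)$, Erd\H{o}s's theorem on complete $(k+1)$-partite hypergraphs shows that any $H$-free graph has $o(n^{k+1})$ copies of $K_{k+1}$; this step does not even need the edge count. The removal lemma then passes to a $K_{k+1}$-free $G_1$ at a cost of $o(n^2)$ edges. Your final comparison of a nearly extremal $k$-partite graph with $T_k(n)$ is also correct. The cost of this route is the regularity lemma and its poor quantitative bounds, which is harmless for the qualitative form used in the paper.

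The clique-stability step, however, is wrong as written, and it is the heart of the argument. The construction is fine; the counting is not. The second inequality in your display is false: $\sum_i|A_i|(n-|A_i|)=2\sum_{i<j}|A_i||A_j|$, which equals $2e(T_k(n))$ for a balanced partition. Summing full degrees counts every cross edge twice. At best it gives $e(G_1)\le\sum_{i<j}|A_i||A_j|$, which is just Tur\'an's bound and says nothing about $\sum_i e(G_1[A_i])$. Moreover, for $i\ge 2$ you cannot bound $d(v)$ by $d(v_i)$ for $v\in A_i$, because $v_i$ maximizes degree only inside the common neighbourhood.

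The fix is to count one-sidedly. Put $U_0=V$ and $U_i=U_{i-1}\cap N(v_i)=A_{i+1}\cup\cdots\cup A_k$. Each $v\in A_i\subseteq U_{i-1}$ satisfies $|N(v)\cap U_{i-1}|\le |N(v_i)\cap U_{i-1}|=|U_i|$ by the choice of $v_i$. Hence $2e(G_1[A_i])+e(A_i,U_i)\le |A_i||U_i|$. Summing over $i$, every cross edge is counted exactly once (from its earlier part) and every inner edge twice, which gives
\[
e(G_1)+\sum_i e(G_1[A_i])\le \sum_{i<j}|A_i||A_j|\le e(T_k(n)).
\]
Therefore $\sum_i e(G_1[A_i])\le m$, and with this correction the rest of your argument goes through.
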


\section{Proof of Theorem \ref{MainResult}}

Let $h(n)=e(T_2(n))$. First, we use a standard trick to reduce the problem to $TP_4$-free graphs with large minimum degree (Lemma \ref{lemMain}). Before proving Lemma \ref{lemMain}, we will use it to prove Theorem \ref{MainResult} first, and then present the proof of Lemma \ref{lemMain}.

\begin{lemma}\label{lemMain}
Let $n$ be sufficiently large and $G$ be an $n$-vertex TP$_4$-free graph with $\delta(G)\ge h(n)-h(n-1)$. Then \[e(G)\le h(n)+\textup{O}(n^{\frac{4}{3}}).\]
\end{lemma}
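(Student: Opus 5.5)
We may assume $e(G)\ge h(n)$, since otherwise there is nothing to prove. Theorem \ref{ErdStoneSimoThm} applied with $\chi(TP_4)=3$ then shows that $e(G)=h(n)-o(n^2)$. Theorem \ref{erdSimoThm} therefore applies, and $G$ is obtained from $T_2(n)$ by adding and deleting $o(n^2)$ edges.

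\textbf{Step 1 (partition).} Take a partition $V(G)=A\cup B$ that maximizes $e(A,B)$. Then $|A|,|B|=n/2+o(n)$ and $e(A)+e(B)=o(n^2)$. By maximality, every vertex $v\in A$ satisfies $d_A(v)\le d_B(v)$, and symmetrically for $B$. We have $h(n)-h(n-1)=\lfloor n/2\rfloor$, so every vertex has degree at least $\lfloor n/2\rfloor$. Let $W$ be the set of vertices $v\in A$ with $d_A(v)\ge \varepsilon n$, together with the analogous vertices in $B$.

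First we show $W=\emptyset$. Suppose $v\in A$ has $d_A(v)\ge\varepsilon n$; it also has $d_B(v)\ge\varepsilon n$. Since almost all $A$--$B$ pairs are edges, $N_A(v)\cup N_B(v)$ induces a dense bipartite graph. Adding $v$ as a vertex adjacent to both sides, we can embed $TP_4$: use $v$ for one color class vertex, or more simply use the supersaturation in Theorem \ref{erd1983SimoThm} on $G[N_A(v),N_B(v)]$ together with $v$. This needs a concrete embedding, which I would check on the 3-coloring of $TP_4$ with one color class a single vertex plus a bipartite remainder, or else argue via a large complete tripartite graph $K_{1,10,10}$ combined with common neighborhoods. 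So $\Delta(G[A]),\Delta(G[B])\le\varepsilon n$. Consequently each vertex has at least $n/2-2\varepsilon n$ neighbors across, and hence misses only $O(\varepsilon n)$ vertices of the other side.

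\textbf{Step 2 (edges inside parts).} Write $e(G)=e(A,B)+e(A)+e(B)\le |A||B|+e(A)+e(B)\le h(n)+e(A)+e(B)$. It therefore suffices to show $e(A)=O(n^{4/3})$, and symmetrically $e(B)=O(n^{4/3})$. Suppose instead that $e(G[A])\ge \mathrm{ex}(n,C_6)+10n$, which is $\Theta(n^{4/3})$ by the known bound $\mathrm{ex}(n,C_6)=O(n^{4/3})$. Proposition \ref{propMain} then gives a copy of $F$ on a 10-vertex set $S\subseteq A$. By Step 1, every vertex of $S$ is adjacent to all but $O(\varepsilon n)$ vertices of $B$. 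So the common neighborhood of $S$ in $B$ has size at least $|B|-10\cdot O(\varepsilon n)\ge 10$. Take $10$ of these common neighbors as $T$. The sets $S$ and $T$ contain $K_{10,10}$ with $F$ added inside one part, and Proposition \ref{propTP4} yields a $TP_4$, a contradiction.

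\textbf{Main obstacle.} Step 2 uses the full-degree structure from Step 1, and the step I expect to be hardest is excluding high-degree vertices within a part, that is, proving $W=\emptyset$. The issue is whether one vertex of large inside-degree already forces $TP_4$. If it does not, the fallback is to bound $|W|=o(n)$ using $e(A)+e(B)=o(n^2)$. Then the minimum degree condition $\delta(G)\ge\lfloor n/2\rfloor$ would be used to show that vertices outside $W$ still have nearly full cross-degree, and to run Step 2 on $A\setminus W$ and $B\setminus W$. The edges incident to $W$ would be absorbed by comparing $e(G)$ with $h(n)$: each vertex of $W$ costs many missing cross edges, so deleting it does not help the count. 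This costing is the natural place where the condition $\delta(G)\ge h(n)-h(n-1)$ is used.
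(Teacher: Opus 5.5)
There is a genuine gap, and it is exactly where you suspected. The claim $W=\emptyset$ in Step 1 is false, because one vertex joined to both sides of a dense bipartite graph never forces $TP_4$. $TP_4$ contains three vertex-disjoint triangles, namely the corner triangles $y_1^1y_1^2y_2^2$, $y_1^4y_1^5y_2^5$ and $y_4^4y_4^5y_5^5$. Hence $TP_4-x$ is non-bipartite for every vertex $x$, and so $TP_4\not\subseteq K_{1,s,t}$. In the proper $3$-colouring $y_t^i\mapsto i+t \pmod 3$, every colour class has $5$ vertices, not $1$.

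Concretely, consider $G=K_{1,\lfloor (n-1)/2\rfloor,\lceil (n-1)/2\rceil}$. It is $TP_4$-free. It has $\delta(G)\ge\lfloor n/2\rfloor=h(n)-h(n-1)$. It has $e(G)=h(n-1)+n-1\ge h(n)$. In every max-cut its apex has about $n/2$ neighbours on its own side, so $W\neq\emptyset$.

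Your fallback does not repair this. The bound $|W|=o(n)$ from $e(A)+e(B)=o(n^2)$ uses nothing about $TP_4$. The assertion that each vertex of $W$ ``costs many missing cross edges'' is false: the apex above is adjacent to everything, so its degree exceeds $n/2$ by about $n/2$ while no cross edge is missing. Running Step 2 on $A\setminus W$ and $B\setminus W$ only gives $e(G)\le h(n-|W|)+O(n^{4/3})+\sum_{v\in W}d(v)$. Since vertices of $W$ can have degree close to $n$, this is $h(n)+O(n^{4/3})$ only if $|W|=O(n^{1/3})$.

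The missing idea is the paper's double counting (Claim \ref{clm2}), which applies $TP_4$-freeness to several vertices of $W$ at once. For $v\in W$, both $N_A(v)$ and $N_B(v)$ have $\Omega(n)$ vertices, by the max-cut property. They span an almost complete bipartite graph, so by Theorem \ref{erd1983SimoThm} the set $N(v)$ contains $\Omega(n^{30})$ copies of $K_{15,15}$. No such copy can lie in the common neighbourhood of $5$ vertices, since that would give $K_{5,5,5}\supseteq TP_4$. Hence $|W|\cdot\Omega(n^{30})\le 4n^{30}$, so $|W|=O(1)$, and the edges at $W$ contribute only $O(n)$.

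With that in place, the rest of your plan is essentially the paper's proof. That means: the minimum degree giving almost complete cross neighbourhoods outside $W$; $F$-freeness of $G[A\setminus W]$ and $G[B\setminus W]$ via Proposition \ref{propTP4}; then Proposition \ref{propMain} with $\mathrm{ex}(n,C_6)=O(n^{4/3})$.
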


\begin{MainThmproof}
In \cite{Ghosh2022Xiao}, the authors have showed a construction and proved 
$$\textup{ex}(n,TP_4)\ge\frac{1}{4}n^2+\textup{O}(n^{\frac{4}{3}}),$$ 
so  we only need to prove the upper bound:
\[ex(n,TP_4)\le h(n)+\textup{O}(n^{\frac{4}{3}}).\]
 
 Let $G$ be a $TP_4$-free graph on $n$ vertices, where $n$ is large enough. If $e(G)\le h(n)$ or $\delta(G)\ge h(n)-h(n-1)$, then there is nothing to prove by Lemma \ref{lemMain}. So we may assume $e(G)>h(n)$ and there exists a vertex $u_n\in V(G)$ with $d_G(u_n)\le h(n)-h(n-1)-1$.  We define a process  as follows: Let $G_n=G$ and $G_{n-1}=G_n-\{u_n\}$. Suppose
 $G_i$ is already defined. If there exists a vertex $u_i\in G_i$ with $d_{G_i}(u_i)\le h(i)-h(i-1)-1$, then let $G_{i-1}=G_i-\{u_i\}$.  Repeat this iteration until we get $G_t$ such that $\delta(G_t)\ge h(t)-h(t-1)$.
 
During the process, we have 
 \[e(G_{i-1})=e(G_{i})-d_{G_i}(u_i)\ge e(G_i)-h(i)+h(i-1)+1.\]
  Since $e(G_n)=e(G)\ge h(n)$, by above inequality, we have 
\begin{align*}
e(G_t)&\ge h(n)-\sum_{k=t+1}^{n} (h(k)-h(k-1)-1)\\
&\ge h(t)+n-t.
\end{align*}
Suppose that $n\ge n_0+\binom{n_0}{2}$, here $n_0$ satisfies the condition in Lemma \ref{lemMain}. If $t<n_0$, then $e(G_t)\ge n-t>\binom{n_0}{2}$. On the other hand, $e(G_t)\le \binom{t}{2}<\binom{n_0}{2}$, a contradiction. So we have $t\ge n_0$. Because of $\delta(G_t)\ge h(t)-h(t-1)$ and Lemma \ref{lemMain}, we have $e(G_t)\le h(t)+\textup{O}(t^{\frac{4}{3}})$. Hence, 
\begin{align*}
e(G)&=e(G_t)+\sum_{k=t+1}^{n} d_{H_k}(u_k)\\
&\le  h(t)+\textup{O}(t^{\frac{4}{3}})+h(n)-h(t)-(n-t)\\
&\le h(n)+\textup{O}(n^{\frac{4}{3}}).
\end{align*}
The proof of Theorem \ref{MainResult} is complete. 
\end{MainThmproof}

Now we give the proof of Lemma \ref{lemMain}. 
\begin{MainLemproof}
Note that $\delta(G)\ge h(n)-h(n-1)\ge\lfloor\frac{n}{2}\rfloor$.  Assume that $X_1$ and $X_2$ is a partition of $V(G)$ with $e(X_1,X_2)$ maximal. If $e(G)<h(n)$, we complete the proof. So suppose that $e(G)\ge h(n)$. Let $\varepsilon>0$ be sufficiently small. Because of $\chi (TP_4)=3$, by Theorem \ref{erdSimoThm}, we have that 
\begin{equation}\label{eqn3}
e(G[X_1])+e(G[X_2])< \varepsilon n^2.
\end{equation}
For the same reason, we also have that 
\begin{equation}\label{eqn4}
e(G[X_1,X_2])> h(n)-\varepsilon n^2.
\end{equation}
For all $i\in\{1,2\}$,  we will bound $|X_i|$ by presenting the following claim. 
\begin{claim}\label{clm3}
For every $i\in\{1,2\}$, $|X_i|\le (\frac{1}{2}+\sqrt{\varepsilon})n$.
\end{claim}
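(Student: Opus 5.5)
The plan is to combine inequality \eqref{eqn4} with the elementary bound $e(G[X_1,X_2])\le |X_1||X_2|$. Write $|X_1|=\frac{n}{2}+a$ and $|X_2|=\frac{n}{2}-a$, where $a\in\mathbb{R}$ and, by symmetry, $a\ge 0$. It suffices to show $a\le \sqrt{\varepsilon}\,n$. This gives the claim for $X_1$. The claim for $X_2$ is then automatic, because $|X_2|\le \frac{n}{2}\le (\frac12+\sqrt{\varepsilon})n$.

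The key computation is
\[
|X_1||X_2|=\frac{n^2}{4}-a^2 .
\]
Since $h(n)=\lfloor n^2/4\rfloor\ge \frac{n^2}{4}-\frac14$, inequality \eqref{eqn4} yields
\[
\frac{n^2}{4}-a^2\ \ge\ e(G[X_1,X_2])\ >\ h(n)-\varepsilon n^2\ \ge\ \frac{n^2}{4}-\frac14-\varepsilon n^2 .
\]
Hence $a^2<\varepsilon n^2+\frac14$. If $\varepsilon$ is fixed and $n$ is sufficiently large, this is a hair above what we need. There are two ways to absorb the extra $\frac14$. The first is to apply Theorem \ref{erdSimoThm} with a slightly smaller constant: the $o(n^2)$ there allows us to replace $\varepsilon$ by $\varepsilon/2$ in \eqref{eqn4}, giving $a^2<\frac{\varepsilon}{2}n^2+\frac14\le \varepsilon n^2$ for large $n$. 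The second is to note that $a\in\frac12\mathbb{Z}$, so we may use integer rounding. Either way we get $a\le\sqrt{\varepsilon}\,n$, as desired.

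There is no real obstacle here. The only point needing care is that \eqref{eqn4} is stated with a strict inequality and a slack of exactly $\varepsilon n^2$, while the floor in $h(n)$ costs an additional $\frac14$. I would handle this by invoking Theorem \ref{erdSimoThm} with $\varepsilon/2$ in place of $\varepsilon$ when deriving \eqref{eqn4}, which is legitimate because $\varepsilon$ is arbitrary and $n$ is taken large.
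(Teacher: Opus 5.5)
Your argument is correct and is essentially the paper's: the paper bounds $e(G)\le \varepsilon n^2+|X_1||X_2|$ via \eqref{eqn3} and compares with $e(G)\ge h(n)$, which is the same computation as yours via \eqref{eqn4}. Your $\varepsilon/2$ fix also properly absorbs the $\frac14$ lost to the floor in $h(n)$ for odd $n$, a point the paper skips.

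Drop the integer-rounding alternative, though. For odd $n$ with $a=m+\frac12$ you only get $m(m+1)<\varepsilon n^2$. That does not imply $(m+\frac12)^2\le \varepsilon n^2$ when $\varepsilon n^2\in\bigl(m(m+1),\,m(m+1)+\frac14\bigr)$.
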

\begin{proof}
Without loss of generality, suppose that $|X_1|=(\frac{1}{2}+r)n$ and $|X_2|=(\frac{1}{2}-r)n$, where $r\ge 0$. By the inequality (\ref{eqn3}), we have
\[e(G)\le e(G[X_1])+e(G[X_2])+e(G[X_1,X_2])\le \varepsilon n^2+|X_1||X_2|\le \varepsilon n^2+\left(\frac{1}{4}-r^2\right)n^2.\]
Since $e(G)\ge h(n)$, it follows that $r\le \sqrt{\varepsilon}$.  Hence, $|X_i|\le (\frac{1}{2}+\sqrt{\varepsilon})n$, where $i\in\{1,2\}$.
\end{proof}

Let $\lambda=\sqrt[3]{4\varepsilon}$, where $\varepsilon$ is small enough such that  $\lambda\ll 1$.  Let $R_i=\{u\in X_i~|~|N_G(u)\cap X_i|>\lambda n\}$ for all $i\in\{1,2\}$. Set $R=R_1\cup R_2$ and let $W_i=X_i\setminus R$ for all $i\in\{1,2\}$. By the inequality (\ref{eqn3}) and $\lambda\ge 2\sqrt{\varepsilon}$, we get
\[|R|<\frac{2\varepsilon n^2}{\lambda n}\le \frac{\lambda}{2} n.\]
In fact, we will show $|R|$ is a constant. 
\begin{claim}\label{clm2}
$|R|=\textup{O}(1)$. 
\end{claim}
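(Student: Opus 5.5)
We want to show $|R|=\textup{O}(1)$. The plan is to argue by contradiction, using the max-cut property of $(X_1,X_2)$, the minimum degree condition, and the structural fact behind Proposition~\ref{propTP4}: a $TP_4$ appears as soon as we find a copy of $F$ inside one side together with many common neighbours on the other side. Any $TP_4$ we exhibit contradicts that $G$ is $TP_4$-free.

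First, degrees of vertices in $R$. Take $u\in R_1$. By maximality of $e(X_1,X_2)$, moving $u$ to $X_2$ does not increase the cut, so $|N(u)\cap X_2|\ge |N(u)\cap X_1|>\lambda n$. Hence every $u\in R$ has more than $\lambda n$ neighbours in each of $X_1$ and $X_2$.

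Second, most vertices of $W$ are nearly complete to the opposite side. Every $w\in W_i$ has at most $\lambda n$ neighbours inside $X_i$. Together with $\delta(G)\ge\lfloor n/2\rfloor$ and Claim~\ref{clm3}, this shows $w$ misses at most about $(\lambda+\sqrt\varepsilon)n$ vertices of $X_{3-i}$. So $W_1,W_2$ form an almost complete bipartite graph, in which every vertex misses at most $2\lambda n$ vertices of the other side, and $|W_i|\ge(\frac12-\sqrt\varepsilon-\lambda)n$.

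Now suppose $|R|\ge C$ for a large constant $C$. Pigeonholing, we may assume $|R_1|\ge C/2$. Pick $u_1,\dots,u_m\in R_1$ with $m=C/2$.
\begin{enumerate}[(a)]
\item The goal is to build a copy of $F$ inside $X_1$ whose ten vertices have at least $10$ common neighbours in $X_2$. Proposition~\ref{propTP4} then gives a $TP_4$.
\item To get many common neighbours in $X_2$, restrict to $W_2$. Each $u_j$ has more than $\lambda n$ neighbours in $X_2$, hence more than $\lambda n/2$ in $W_2$ because $|R|\le\lambda n/2$. Each vertex of $W_1$ misses only $2\lambda n\ll$ of $W_2$, but $\lambda n$ is not large compared with $2\lambda n$. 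So I would instead use only a bounded number of $R$-vertices in the copy of $F$, say $x_3$ and $x_8$, which carry the degree, and take the remaining eight vertices from $W_1$.
\item The common neighbourhood in $W_2$ of two vertices $u,u'\in R_1$ is not automatically large. The fix is to count: among $\binom{m}{2}$ pairs, there are sets $N(u_j)\cap W_2$ of size $\lambda n/2$ in a ground set of size $n$. By inclusion–exclusion, once $m>4/\lambda$ some pair $u,u'$ has $|N(u)\cap N(u')\cap W_2|\ge \lambda^2 n/8$. This is why $C$ depends on $\lambda$.
\item With such a pair $u,u'$, build $F$ with $x_3=u$ and $x_8=u'$. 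Each of $u$ and $u'$ has more than $\lambda n$ neighbours in $X_1$, hence more than $\lambda n/2$ in $W_1$. Choose the paths $x_3x_4x_5x_6x_7x_8$ and the pendant paths $x_1x_2x_3$ and $x_8x_9x_{10}$ from $W_1$, using edges of $G[W_1]$.
\end{enumerate}

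This last step is the main obstacle. $G[W_1]$ may be sparse, so the needed paths inside $X_1$ need not exist. If this direct route fails, I would instead use the cut more cleverly. Note that $x_4,x_5,x_6,x_7$ and the rest can be placed on the other colour class, i.e.\ re-choose the 3-colouring of $TP_4$ so that the vertices from $R$ are the only ones needing in-side edges. Alternatively, use the neighbourhoods of $u$ inside $X_1$: since $u$ has $\lambda n$ neighbours in $X_1$, each almost complete to $W_2$, one can embed a large part of $TP_4$ as $u$ together with an almost complete bipartite structure. A $TP_4$ minus a suitable vertex set is bipartite, so it suffices to find two or three such high-degree vertices with overlapping neighbourhoods, which the counting in (c) supplies. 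Either way, the outcome is that $|R|$ is bounded by a constant depending only on $\lambda$, i.e.\ $|R|=\textup{O}(1)$.
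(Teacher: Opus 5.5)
Your steps (a)--(d) do not yield a proof, and the gap is exactly where you place it. Taking $x_3=u$, $x_8=u'$ and the other eight vertices of $F$ in $W_1$ requires the edges $x_1x_2$, $x_4x_5$, $x_5x_6$, $x_6x_7$, $x_9x_{10}$ inside $W_1$. The assumption that $|R|$ is large gives you no edges in $G[W_1]$ at all. Proposition~\ref{propTP4} is the right tool for Claim~\ref{clm5}, not for this claim.

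The fallback is not a proof either, and as stated it is false. Suppose $G[W_1]$ and $G[W_2]$ have no edges. Then every copy of $TP_4$ must put an odd cycle transversal of $TP_4$ into $R$. But $TP_4$ contains four vertex-disjoint triangles: the corner triangles $y^1_1y^2_1y^2_2$, $y^4_1y^5_1y^5_2$, $y^4_4y^5_4y^5_5$ and the central one $y^3_2y^4_2y^4_3$. So two or three vertices of $R$ can never suffice.

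The pair counting in (c) also falls short. It only gives a common neighbourhood on one side, of size about $\lambda^2n/8$. On sets that small you have no control over the cross edges: all you know is that fewer than $\varepsilon n^2=\lambda^3n^2/4$ cross edges are missing, and this exceeds $(\lambda^2n/8)^2$.

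The missing idea is to use the density of the cross graph inside the neighbourhood of a \emph{single} vertex, before intersecting anything, and then double count.
\begin{itemize}
\item For $u\in R$, the max-cut property gives $|N(u)\cap W_i|>\lambda n/2$ for both $i$.
\item Take $A_i\subseteq N(u)\cap W_i$ of size $\lceil\lambda n/2\rceil$. Since fewer than $\lambda^3n^2/4=\lambda(\lambda n/2)^2$ cross edges are missing, $G[A_1,A_2]$ has density at least $1-\lambda$.
\item Supersaturation (Theorem~\ref{erd1983SimoThm}) then gives $\Omega(n^{30})$ copies of $K_{15,15}$ inside $N(u)$; copies of $K_{5,5}$ would already do.
\item $TP_4$ has a proper $3$-colouring with all three colour classes of size $5$, so $TP_4\subseteq K_{5,5,5}$. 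Hence no such copy can lie in the common neighbourhood of five vertices of $R$, since those five would serve as the independent third class.
\item Counting pairs $(u,S)$ with $u\in R$ and $V(S)\subseteq N(u)$ gives $|R|\cdot\Omega(n^{30})\le 4n^{30}$, i.e.\ $|R|=O(1)$.
\end{itemize}
This argument uses five vertices of $R$, which is consistent with the lower bound of four shown above.
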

\begin{proof}
For every $u\in X_i$ and $i\in\{1,2\}$, we have $|N(u)\cap X_{3-i}|\ge |N(u)\cap X_i|$. Otherwise, $X_i\setminus\{u\}$ and $X_{3-i}\cup\{u\}$ is a new partition of $V(G)$ such that \[e(X_i\setminus\{u\},X_{3-i}\cup \{u\})>e(X_i,X_{3-i}),\] a contradiction.  For every fixed $u\in R$, since $W_i=X_i\setminus R$ and $|R|< \frac{\lambda}{2} n$, we have $|N(u)\cap W_i|>\frac{\lambda}{2} n$. Let $n_0=\lceil\frac{\lambda}{2} n\rceil$ and $A_i\subseteq N(u)\cap W_i$ be a set of size $n_0$. By the inequality (\ref{eqn4}) and $\lambda=\sqrt[3]{4\varepsilon}$,  we have \[e(G[A_1,A_2])>|A_1||A_2|-\varepsilon n^2>n_0^2-\frac{\lambda^3}{4}n^2\ge (1-\lambda)n_0^2.\]
By Theorem \ref{erd1983SimoThm}, we deduce that the graph $G[A_1\cup A_2]$  contains at least $C_\varepsilon(2n_0)^{30}=C'_\varepsilon n^{30}$ copies of $K_{15,15}$, where $C_\varepsilon$ and $C'_\varepsilon$ are constants depending on $\varepsilon$.

Let $\mathcal{T}$ be the set  of all copies of $K_{15,15}$ in $G$. Note that $|\mathcal{T}|\le n^{30}$. Define a bipartite graph $\mathcal{F}$ with the bipartition $(\mathcal{T},R)$ and $V(\mathcal{F})=\mathcal{T}\cup R$. For any $S\in \mathcal{T}$ and $u\in R$, $Su\in E(\mathcal{F})$ if and only if $V(S)\subseteq N_G(u)$. By the above proof, we have $d_{\mathcal{F}}(u)\ge C'_\varepsilon n^{30}$ for all $u\in R$. If there exists an $S\in \mathcal{T}$ such that $d_{\mathcal{F}}(S)\ge 5$, then $G$ contains a $TP_4$, a contradiction. So $d_{\mathcal{F}}(S)\le 4$ for all $S\in \mathcal{T}$. Hence, %
\[C'_\varepsilon n^{30}|R|\le \sum_{u\in R}d_{\mathcal{F}}(u)\le e(\mathcal{F})\le \sum_{S\in \mathcal{T}} d_{\mathcal{F}}(S)\le 4|\mathcal{T}|\le 4n^{30}.\] 
So $|R|\le \frac{4}{C'_\varepsilon}$, we are done.
\end{proof}

\begin{claim}\label{clm4}
For every $u\in W_i$ and every $i\in\{1,2\}$, $|N(u)\cap W_{3-i}|> \lfloor\frac{n}{2}\rfloor-\frac{3}{2}\lambda n$.
\end{claim}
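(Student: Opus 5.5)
This is a direct degree count. We combine the minimum degree assumption $\delta(G)\ge h(n)-h(n-1)\ge\lfloor\frac{n}{2}\rfloor$ with the definition of $W_i$ and the bound $|R|<\frac{\lambda}{2}n$ obtained before Claim \ref{clm2}.

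Fix $i\in\{1,2\}$ and $u\in W_i$. Since $W_i=X_i\setminus R$, we have $u\notin R_i$, so by definition $|N(u)\cap X_i|\le \lambda n$. Partition the neighbourhood of $u$ as
\[
N(u)=\bigl(N(u)\cap X_i\bigr)\cup\bigl(N(u)\cap W_{3-i}\bigr)\cup\bigl(N(u)\cap (X_{3-i}\cap R)\bigr).
\]
The first part has size at most $\lambda n$. The third part has size at most $|R|<\frac{\lambda}{2}n$; Claim \ref{clm2} even shows $|R|=\textup{O}(1)$, but the weaker linear bound suffices here. It follows that
\[
|N(u)\cap W_{3-i}|\ge d(u)-\lambda n-|R|>\left\lfloor\tfrac{n}{2}\right\rfloor-\lambda n-\tfrac{\lambda}{2}n=\left\lfloor\tfrac{n}{2}\right\rfloor-\tfrac{3}{2}\lambda n .
\]
The inequality is strict because $|R|<\frac{\lambda}{2}n$ is strict.

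There is no real obstacle. The only points to check are that $h(n)-h(n-1)=\lfloor n/2\rfloor$, which holds since $h(n)=\lfloor n^2/4\rfloor$, and that vertices of $R$ lying inside $X_i$ are already counted within $N(u)\cap X_i$, so they are not subtracted twice. Neither point changes the bound.
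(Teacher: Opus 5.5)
Your proof is correct and is essentially the paper's argument. You get $|N(u)\cap X_{3-i}|\ge\lfloor n/2\rfloor-\lambda n$ from $\delta(G)\ge\lfloor n/2\rfloor$ and $|N(u)\cap X_i|\le\lambda n$ (since $u\notin R$), then discard the at most $|R|<\frac{\lambda}{2}n$ neighbours in $X_{3-i}\cap R$; your extra checks that $h(n)-h(n-1)=\lfloor n/2\rfloor$ and that strictness comes from $|R|<\frac{\lambda}{2}n$ are also right.
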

\begin{proof}
By symmetry, suppose that $i=1$ and $u\in W_1$. Note that $|N(u)\cap X_1|\le \lambda n$ and  $\delta(G)\ge \lfloor\frac{n}{2}\rfloor$. By the definition of $R$, we have
\[ \left\lfloor\frac{n}{2}\right\rfloor\le d_G(u)=|N(u)\cap X_2|+|N(u)\cap X_1|\le |N(u)\cap X_2|+\lambda n.\]
So $ |N(u)\cap X_2|\ge \lfloor\frac{n}{2}\rfloor-\lambda n$.  Since $|R_2|\le |R|$ and $|R|< \frac{\lambda n}{2}$, we have 
\begin{align*}
|N(u)\cap W_2| &=|N(u)\cap X_2|-|N(u)\cap R_2|\\
&\ge  |N(u)\cap X_2|-|R|\\
&>  \left\lfloor\frac{n}{2}\right\rfloor-\frac{3}{2}\lambda n.
\end{align*}
This proves Claim \ref{clm4}.
\end{proof}

\begin{claim}\label{clm5}
For every $i\in\{1,2\}$, $G[W_i]$ is $F$-free.  
\end{claim}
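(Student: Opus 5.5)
We argue by contradiction. Suppose $G[W_1]$ contains a copy of $F$, with vertices $x_1,\dots,x_{10}\in W_1$; the case $i=2$ is symmetric. By Proposition \ref{propTP4}, it suffices to find $10$ vertices in $W_2$ that are all adjacent to every one of $x_1,\dots,x_{10}$. Then these $10$ vertices, together with the $x_j$, contain a $K_{10,10}$ with $F$ embedded in one part, which gives a $TP_4$ in $G$, a contradiction.

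To find such vertices, apply Claim \ref{clm4} to each $x_j$. Each $x_j\in W_1$ satisfies $|N(x_j)\cap W_2|>\lfloor\frac{n}{2}\rfloor-\frac{3}{2}\lambda n$. By Claim \ref{clm3}, $|W_2|\le|X_2|\le(\frac12+\sqrt{\varepsilon})n$. So each $x_j$ misses fewer than $(\frac12+\sqrt{\varepsilon})n-\lfloor\frac n2\rfloor+\frac32\lambda n\le (\sqrt{\varepsilon}+\frac32\lambda)n+1$ vertices of $W_2$. Summing over the $10$ vertices of $F$, the common neighbourhood satisfies
\[
\Bigl|\bigcap_{j=1}^{10} N(x_j)\cap W_2\Bigr|\ \ge\ |W_2|-10\bigl((\sqrt{\varepsilon}+\tfrac32\lambda)n+1\bigr).
\]

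It remains to check that $|W_2|$ is substantially larger than this loss. Claim \ref{clm3} applied to $X_1$ gives $|X_2|\ge(\frac12-\sqrt{\varepsilon})n$. Claim \ref{clm2} gives $|R|=O(1)$, so $|W_2|\ge(\frac12-\sqrt{\varepsilon})n-O(1)$. Since $\sqrt{\varepsilon}\le\lambda/2$ and $\lambda\ll1$, the common neighbourhood has size at least $(\frac12-20\lambda)n-O(1)\ge 10$ for $n$ large. (The cruder bound $|R|<\frac{\lambda}{2}n$ would also suffice.) We then pick $10$ common neighbours $y_1,\dots,y_{10}\in W_2$. The graph on $\{x_j\}\cup\{y_k\}$ contains $K_{10,10}$ plus a copy of $F$ inside the $x$-side, so Proposition \ref{propTP4} gives a $TP_4$.

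The only delicate step is the bookkeeping in the intersection bound. One must make sure that the deficiency of each $x_j$ relative to $W_2$ is $O(\lambda n)$ rather than of order $n$. This requires both the upper bound on $|W_2|$ from Claim \ref{clm3} and the degree lower bound from Claim \ref{clm4}; with both in hand the conclusion follows immediately.
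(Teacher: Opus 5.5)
Your proof is correct and follows the paper's argument: you bound the common neighbourhood of $V(F)$ in $W_2$ by inclusion--exclusion, using Claim \ref{clm4} together with the upper bound $|W_2|\le(\frac12+\sqrt{\varepsilon})n$ from Claim \ref{clm3}, and then invoke Proposition \ref{propTP4}. Your extra lower bound on $|W_2|$ (via Claim \ref{clm2}) is harmless but unnecessary, since $|W_2|-\sum_{j}(|W_2|-|N(x_j)\cap W_2|)=\sum_{j}|N(x_j)\cap W_2|-9|W_2|$ decreases in $|W_2|$ and so needs only the upper bound; the paper's extra division by $|Y|$ is likewise an inessential weakening.
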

\begin{proof}
Suppose not,  we assume that $G[W_1]$ contains a copy of  $F$ as a subgraph. Let $Y=V(F)$. By Claim \ref{clm3}, we have 
\begin{equation}\label{eqnMain}
|W_i|\le |X_i|<\frac{n}{2}+\sqrt{\varepsilon} n.
\end{equation}
 Note that the number of common neighbors of $Y$ in $W_2$ is at least
\begin{align*}
&~~~~\frac{1}{|Y|}\left(\sum_{v\in Y}|N(v)\cap W_2|-(|Y|-1)|W_2|\right)\\
&>\frac{1}{|Y|}\left(|Y|\left(\left\lfloor\frac{n}{2}\right\rfloor-\frac{3}{2}\lambda n\right)-(|Y|-1)\left(\frac{n}{2}+\sqrt{\varepsilon} n\right)\right)\\
&\ge  \frac{1}{|Y|}\left(|Y|\left(\frac{n-1}{2}-\frac{3}{2}\lambda n\right)-|Y|\left(\frac{1}{2}+\sqrt{\varepsilon}\right)n+\left(\frac{1}{2}+\sqrt{\varepsilon}\right)n\right)\\
&\ge -\left(\sqrt{\varepsilon}+\frac{3}{2}\lambda\right)n-\frac{1}{2}+\left(\frac{1}{2}+\sqrt{\varepsilon}\right)\frac{n}{|Y|}\\
&\ge \left(\frac{1}{20}-\frac{3}{2}\lambda-\frac{9}{10}\sqrt{\varepsilon}\right)n-\frac{1}{2}.
\end{align*}
The first inequality follows from the inequality (\ref{eqnMain}) and Claim \ref{clm4}. Since  $\lambda$ and $\varepsilon$ is sufficiently small, there exists a set $C\subset W_2$ such that $|C|=10$ and $G[Y,C]$ is a complete bipartite graph.  By Proposition \ref{propTP4}, $G[Y,C]$ contains a copy of $TP_4$, a contradiction.
\end{proof}
Now we estimate the upper bound of $e(G)$. Note that 
\[e(G[W_1,W_2])\le e(T_2(n))=h(n).\]
By Claim \ref{clm5} and Proposition \ref{propMain}, we have
\[e(G[W_1])+e(G[W_2])\le 2\textup{ex}(n,C_6)+10n\le\textup{O}(n^{\frac{4}{3}}).\]
The last inequality follows from a result on the upper bound of $\textup{ex}(n,C_{2k})$ due to Bondy and Simonovits \cite{Bondy1974Simonovits}.
By Claim \ref{clm2}, the number of edges that contain some vertex in $R$ is at most 
\[|R|n\le\textup{O}(n).\]
Combining the above inequality, we deduce that 
\[e(G)\le e(G[W_1,W_2])+e(G[W_1])+e(G[W_2])+|R|n \le h(n)+\textup{O}(n^{\frac{4}{3}}).\]
The proof of Lemma \ref{lemMain} is completed.
\end{MainLemproof}

\section*{Declarations}
The authors declare that they have no known competing financial interests or personal relationships that could have appeared to influence the work reported in this paper.
\section*{\bf\Large Availability of Data and Materials}  
Not applicable.
\section*{Acknowledgments}
This research is supported by National Key R\&D Program of China under grant number 2024YFA1013900, NSFC under grant numbers 12471327 and 12401454, Natural Science Foundation of Fujian Province under grant number 2024J01875, Science-Technology Foundation of Putian University under grant number 2023059, Basic Research Programma of Jiangsu Province under grant number BK20241361, Jiangsu Funding Program for Excellent Postdoctoral Talent under grant number 2024ZB179 and State‐sponsored Postdoctoral Researcher program under number GZB20240976.



\begin{thebibliography}{10}

\bibitem{Bondy1974Simonovits}
J. A. Bondy and M. Simonovits,
{\em Cycles or even length in graphs},
J. Combin. Theory Ser. B {\bf16} (1974), 97-105.


\bibitem{Erd1966Simonovits}
P. Erd\H{o}s and M. Simonovits,
{\em A limit theorem in graph theory},
Studia Sci. Math. Hungar. {\bf1} (1966), 51-57.

\bibitem{Erd1983Simonovits}
P. Erd\H{o}s and M. Simonovits,
{\em Supersaturated graphs and hypergraphs},
Combinatorica  {\bf3} (1983), 181-192.

\bibitem{Erd1946Stone}
P. Erd\H{o}s and A. Stone,
{\em On the structure of linear graphs},
Bull. Ame. Math. Soc.  {\bf52} (1946), 1087-1091.




\bibitem{Ghosh2022Xiao}
D. Ghosh, E. Gy\H{o}ri, A. Paulos, C. Xiao and O. Zamora,
{\em The Tur\'{a}n number of the triangular pyramid of $3$-layers},
Discrete Appl. Math. {\bf317} (2022), 75-85.

\bibitem{Liu2013Hong}
H. Liu,
{\em Extremal graphs for blow-ups of cycles and trees},
Electron. J. Combin.  {\bf20} (2013), 1-16.




\bibitem{Mantel}
W. Mantel, {\em Problem 28}, soln. by H. Gouventak, W. Mantel, J. Teixeira de Mattes, F. Schuhand and W.A. Wythoff. Wiskundige Opgaven {\bf10} (1907), 60-61.


\bibitem{1966Simonovits}
M. Simonovits,
{\em A method for solving extremal problems in graph theory, stability problems, in: Theory of Graphs},
Proc. Colloq. Tihany  (1966), 279-319.

\bibitem{Turan1941}
  P. Tur\'{a}n,
{\em On an extremal problem in graph theory (in Hungrarian)},
Mat.  Fiz. Lapok. {\bf48} (1941), 436-452.

\bibitem{Xiao2022Zamora}
C. Xiao, G. Katona, J. Xiao and O. Zamora,
{\em The Tur\'{a}n number of the square of a path},
Discrete Appl. Math. {\bf307} (2022), 1-14.





\end{thebibliography}
\end{document}